\documentclass[preprint,11pt]{article}
\usepackage{amsmath}
\usepackage{amsthm}
\usepackage{amsfonts}
\usepackage[latin5]{inputenc}
\setlength{\topmargin}{-.5in}
\setlength{\textheight}{9in}
\setlength{\oddsidemargin}{.125in}
\setlength{\textwidth}{6.25in}

\begin{document}
\numberwithin{equation}{section}
\newcounter{thmcounter}
\newcounter{Remarkcounter}
\newcounter{Examplecounter}
\numberwithin{thmcounter}{section}

\newtheorem{Prop}[thmcounter]{Proposition}
\newtheorem{Corol}[thmcounter]{Corollary}
\newtheorem{theorem}[thmcounter]{Theorem}
\newtheorem{Lemma}[thmcounter]{Lemma}
\theoremstyle{remark}
\newtheorem{Remark}[Remarkcounter]{Remark}
\newtheorem{Example}[Examplecounter]{Example}

\author{Nurettin Cenk Turgay\footnote{
e-mail: turgayn@itu.edu.tr, Adress: Istanbul Technical University, Faculty of Science and Letters,
Department of  Mathematics, 34469 Maslak, Istanbul, Turkey}}

\title{On the Lorentzian minimal surfaces in  $\mathbb E^4_1$ with finite type Gauss map}
\date{\today}
\maketitle
\begin{abstract}
In this paper, we study the Lorentzian minimal surfaces in the Minkowski space-time with finite type Gauss map. First, we obtain the classification of this type of surfaces with pointwise 1-type Gauss map. Then, we proved that there are no Lorentzian minimal surface in the Minkowski space-time with null 2-type Gauss map.

\textit{Keywords.} Minkowski space-time, Lorentzian surface,  finite type mappings
\end{abstract}
\section{Introductions}
The notion of finite type mappings defined on the submanifolds of semi-Euclidean spaces has been extensively studied by several geometers after it was introduced by B. Y. Chen in late 1970's. Let $\mathbb E^m_s$ denote the semi-Euclidean space with dimension $m$ and index $s$ whose metric tensor is given by 
$$\tilde g=\langle\ ,\ \rangle=-\sum\limits_{i=1}^sx_i^2+\sum\limits_{j=s+1}^mx_j^2$$
and consider a submanifold $M$ of  the semi-Euclidean space $\mathbb E^m_s$. A smooth mapping $\phi$ defined on $M$ into another semi-Euclidean space  $\mathbb E^N_S$ is  said to be $k$-type if it can be expressed as a sum of finitely many eigenvectors of the Laplace operator of $M$, \cite{ChenKitap,ChenMakale1986,ChenRapor}. Many important results about finite type mappings defined on semi-Riemannian submanifolds have appeared so far (cf. \cite{Bleecker-Weiner,ReillyHelvaci}).

In particular, the Gauss map of submanifolds has been worked in several articles in this direction after  some results on the submanifolds with 1-type Gauss map or 2-type Gauss map had been given in \cite{Chen-Piccinni}.
The Gauss map $\nu$ of $M$ is said to be $k$-type if it can be expressed as a sum of
\begin{equation}\label{FiniteTypeGaussDef}
 \nu=\nu_0+\nu_1+\nu_2+\hdots+\nu_k,
\end{equation}
where $\nu_0$ is a constant vector and $\nu_i$ is a non-constant eigenvector of $\Delta$ corresponding to eigenvalue $\lambda_i$ for $i=1,2,\hdots,k$ with  $\lambda_1<\lambda_2<\hdots< \lambda_k$ and $\Delta$  is the  Laplace operator of $M$ with respect to the induced metric of $M$. In addition, if one of these eigenvalues is zero, then  $\nu$ is said to be a null $k$-type mapping.

However, the Laplacian of  the Gauss map of several surfaces and hypersurfaces such as  
helicoids  of  the  1st, 2nd, and 3rd  kind, conjugate  Enneper's  surface of the second kind and 
 B-scrolls  in  a 3-dimensional Minkowski space $\mathbb E^3_1$,  
 generalized  catenoids, spherical n-cones, hyperbolical n-cones and 
 Enneper's hypersurfaces in $\mathbb E^{n+1}_1$   take the form
\begin{equation}\label{PW1TypeDefinition}
 \Delta \nu =f(\nu +C)
\end{equation}
for some smooth function $f$ on $M$ and some constant vector
$C$ (\cite{UDur2,Kim-Yoon}).
A submanifold   of a pseudo-Euclidean space is said to have
{\it pointwise 1-type Gauss map} if its  Gauss map satisfies
\eqref{PW1TypeDefinition} for some smooth  function $f$ on $M$  and some
constant vector $C$.   In particular, if $C$ is zero, it is 
said to be of {\it the first kind}.
Otherwise, it  is said to be of {\it the second kind} 
(cf. \cite{Arslan-and,CCK,KKKM,Choi-Kim,UDur,UDur3,Kim-Yoon-2}).

On the other hand, the theory of minimal and quasi-minimal surfaces is one of the most interesting topics in the semi-Euclidean geometry. A surface in $\mathbb E^m_s$ is said to be minimal or quasi-minimal if its mean curvature vector is zero or light-like respectively. In the very recent past, the classification of these type of surfaces are studied in some papers, in terms of type of their Gauss map, \cite{Dursun-Arsan,MinkowskiDursunTurgay, MILOUSHEVA01, NCTGenRelGrav}. In this paper, we focus on  Lorentzian minimal surfaces in the Minkowski space-time $\mathbb E^4_1$ with finite type Gauss map. In the Section 2, after we describe the notation that we will use in this paper, we give a short brief on the basic facts and definitions on the theory of submanifolds. In the Section 3, we focus on the Lorentzian surfaces with pointwise 1-type Gauss map. In the section 4, we study minimal Lorentzian surfaces with 2-type Gauss map.

The surfaces we are dealing with are smooth and connected unless otherwise stated.
\section{Prelimineries}

Let $M$ be an $n$-dimensional pseudo-Riemannian submanifold  of the 
pseu\-do-Euclidean space $\mathbb E^m_s$. 
We denote Levi-Civita connections of $\mathbb E^m_s$ and $M$ by $\widetilde{\nabla}$ and $\nabla$,  
respectively. The Gauss and Weingarten formulas are given, respectively, by
\begin{eqnarray}
\label{MEtomGauss} \widetilde\nabla_X Y&=& \nabla_X Y + h(X,Y),\\
\label{MEtomWeingarten} \widetilde\nabla_X \xi&=& -A_\xi(X)+D_X \xi,
\end{eqnarray}
 for any tangent vector field $X,\ Y$ and normal vector field $\xi$ on $M$, where $h$,  $D$  and  $A$ are the second fundamental form, the normal 
 connection and the shape operator of $M$, respectively. On the other hand, the shape operator $A$ and the second fundamental form $h$ of $M$ are related by
\begin{eqnarray}
\label{MinkAhhRelatedby} \langle A_\xi X,Y\rangle&=&\langle h(X,Y),\xi\rangle.
\end{eqnarray}
Relative null space at $p$ of $M$ is defined as
$$\mathcal N_p(M)=\{X\in T_pM|h(X,Y)=0, \ \mbox{for all} \ Y\in T_pM\}.$$ 
We say $M$ has degenerated relative null bundle if $(\mathcal N_p(M),\langle\ ,\ \rangle)$ is a degenerated inner product space for all $p\in M$.

The Gauss, Codazzi and Ricci equations are given, respectively, by
\begin{eqnarray}
\label{MinkGaussEquation} \langle R(X,Y,)Z,W\rangle&=&\langle h(Y,Z),h(X,W)\rangle-
\langle h(X,Z),h(Y,W)\rangle,\\
\label{MinkCodazzi} (\bar \nabla_X h )(Y,Z)&=&(\bar \nabla_Y h )(X,Z),\\
\label{MinkRicciEquation} \langle R^D(X,Y)\xi,\eta\rangle&=&\langle[A_\xi,A_\eta]X,Y\rangle,
\end{eqnarray}
where  $R,\; R^D$ are the curvature tensors associated with connections $\nabla$ 
and $D$, respectively, and 
$$(\bar \nabla_X h)(Y,Z)=D_X h(Y,Z)-h(\nabla_X Y,Z)-h(Y,\nabla_X Z).$$ 

Consider a Lorentzian surface $M$in $\mathbb E^4_1$ and let  $\{e_1,e_2;e_3,e_4\}$ be a positively oriented local orthogonal frame field on $M$ and $\{f_1,f_2\}$ the pseudo-orthogonal base field of the tangent bundle of $M$ given by $f_1=(e_1-e_2)/\sqrt 2$ and $f_2=(-e_1-e_2)/\sqrt 2$. Then, we have
\begin{eqnarray}
\label{LorSurfDelta}\Delta&=&f_1f_2+f_2f_1-\nabla_{f_1}f_2-\nabla_{f_2}f_1,\\
\label{LorSurfH}H&=& -h(f_1,f_2),\\
K&=&R(f_1,f_2,f_2,f_1)
\end{eqnarray}
where $\Delta$, $H$ and $K$ are the Laplace operator,  the mean curvature vector and the Gaussian curvature of $M$.

The smooth mapping
\begin{equation}\label{MinkGaussTasvTanim}
\begin{array}{rcl}\nu:M&\rightarrow&\subset \mathbb S^{5}_3(1)\subset \mathbb E^6_3\\
p&\mapsto&\nu(p)=(f_{1}\wedge f_{2})(p)\end{array}
\end{equation}
is called the (tangent) Gauss map of $M$. 

From \eqref{LorSurfDelta} one can obtain
\begin{eqnarray}
\label{Deltafu}\Delta(\phi \xi)&=&(\Delta \phi ) \xi+\phi  \Delta \xi-2\nabla\phi (\xi)
\end{eqnarray}
for any smooth function $\phi :M\rightarrow \mathbb R$ and any smooth mappings $\xi,\eta:M\rightarrow \mathbb E^6_3$, where $\nabla\phi$ is the gradient of $\phi$ defined by
$$\nabla\phi=-f_1(\phi) f_2-f_2(\phi) f_1.$$

We will use the following well-known lemmas, \cite{ONeillKitap}.
\begin{Lemma}\label{WellKnownLemmaLghtlks}
In a non-degenerated inner product space with index of 1, two light-like vector are lindearly dependent if and only if they are orthogonal.
\end{Lemma}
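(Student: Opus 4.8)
The plan is to treat the two implications separately, observing that only the converse genuinely uses the index-$1$ hypothesis. For the easy direction, suppose the two light-like vectors $u,v$ are linearly dependent; since a light-like vector is by definition nonzero, we may write $v=cu$, whence $\langle u,v\rangle=c\langle u,u\rangle=0$. This holds in any inner product space, so orthogonality follows with no restriction on the index.

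For the substantive converse I would fix an orthonormal basis $\{e_0,e_1,\dots,e_{m-1}\}$ adapted to the signature, with $\langle e_0,e_0\rangle=-1$ and $\langle e_i,e_i\rangle=+1$ for $i\ge 1$, and decompose $u=a_0e_0+\mathbf a$ and $v=b_0e_0+\mathbf b$ into timelike and spacelike parts. The light-like conditions read $a_0^2=|\mathbf a|^2$ and $b_0^2=|\mathbf b|^2$ in terms of the Euclidean norm on the spatial factor, while orthogonality becomes $\mathbf a\cdot\mathbf b=a_0b_0$. Because $u$ and $v$ are nonzero, all of $a_0,b_0,\mathbf a,\mathbf b$ are nonzero.

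The key step is to invoke the Euclidean Cauchy--Schwarz inequality on the spatial factor: $|\mathbf a\cdot\mathbf b|\le|\mathbf a|\,|\mathbf b|=|a_0b_0|$. Comparing with $\mathbf a\cdot\mathbf b=a_0b_0$ forces equality in Cauchy--Schwarz, so $\mathbf b=\lambda\mathbf a$ for some scalar $\lambda$; substituting back into $\mathbf a\cdot\mathbf b=a_0b_0$ then fixes the sign as $b_0=\lambda a_0$, and hence $v=\lambda u$. The main (and essentially only) obstacle is recognizing that the index-$1$ assumption is precisely what reduces the problem to the equality case of Euclidean Cauchy--Schwarz: if the index exceeded $1$ the spatial part would itself be indefinite, the inequality would be unavailable, and indeed the statement would fail because two-dimensional totally isotropic planes could appear. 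A more conceptual phrasing of the same point is that if $u,v$ were orthogonal, null, and independent, then $\mathrm{span}(u,v)$ would be a two-dimensional totally isotropic subspace, whereas in a non-degenerate space of index $1$ such subspaces have dimension at most one; this alternative route could replace the Cauchy--Schwarz computation if a coordinate-free argument is preferred.
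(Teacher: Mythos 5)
Your proposal is correct, but note that the paper itself offers no proof of this lemma at all: it is stated as one of two ``well-known lemmas'' with a citation to O'Neill's book, so there is no in-paper argument to compare against. Both directions of your proof are sound. The easy direction is handled properly (you correctly use that a light-like vector is nonzero to write $v=cu$ rather than just invoking dependence abstractly). For the converse, the reduction to the equality case of the Euclidean Cauchy--Schwarz inequality on the spacelike factor is exactly the standard argument (and is essentially the proof found in O'Neill), and the details check out: since $u\neq 0$ and $a_0^2=|\mathbf a|^2$, the spatial part $\mathbf a$ is nonzero, so equality in Cauchy--Schwarz genuinely yields $\mathbf b=\lambda\mathbf a$, and then $b_0=\lambda a_0$ follows because $a_0\neq 0$. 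Your alternative coordinate-free route --- that two independent orthogonal null vectors would span a totally isotropic plane, impossible when the Witt index is $\min(1,m-1)=1$ --- is also valid and arguably cleaner, since it isolates precisely where index $1$ enters; your counterexample intuition for higher index (e.g.\ $e_1+e_3$ and $e_2+e_4$ in index $2$) confirms the hypothesis is sharp. The only unstated assumption is finite-dimensionality (needed for the adapted orthonormal basis), which is harmless in the paper's context, where the lemma is applied to tangent and normal spaces of surfaces.
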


\begin{Lemma}\label{WellKnownLemmaNonDeg}
Let $U$ be a subspace of a non-degenerated inner product space $V$. U is non-degenerated if and only if $U\cap U^\perp=\{0\}.$
\end{Lemma}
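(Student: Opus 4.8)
The plan is to prove the equivalence by recognizing that $U\cap U^\perp$ is nothing other than the \emph{radical} of the inner product restricted to $U$, so that the lemma reduces to unwinding the definition of non-degeneracy applied to the subspace $U$ with its induced form. Recall that $U$ being non-degenerated means precisely that the restriction $\langle\ ,\ \rangle|_{U\times U}$ is non-degenerate, that is, any $u\in U$ satisfying $\langle u,u'\rangle=0$ for all $u'\in U$ must vanish.

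First I would establish the set-theoretic identity
$$U\cap U^\perp=\{u\in U:\langle u,u'\rangle=0\ \text{for all}\ u'\in U\}.$$
This is immediate from the definition of $U^\perp$: if $u\in U\cap U^\perp$ then $u\in U$ and, since $u\in U^\perp$, we have $\langle u,v\rangle=0$ for every $v\in U$; conversely, any $u\in U$ orthogonal to all of $U$ lies in $U^\perp$ by definition and hence in $U\cap U^\perp$.

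With this identity in hand, both implications are direct. For the forward direction, assume $U$ is non-degenerated and pick $u\in U\cap U^\perp$; by the identity, $u$ is orthogonal to every element of $U$, so non-degeneracy of the induced form forces $u=0$, whence $U\cap U^\perp=\{0\}$. For the converse, assume $U\cap U^\perp=\{0\}$ and suppose $u\in U$ satisfies $\langle u,u'\rangle=0$ for all $u'\in U$; then $u\in U\cap U^\perp$ by the identity, so $u=0$, which is exactly the non-degeneracy of $\langle\ ,\ \rangle|_{U\times U}$.

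I do not expect a genuine obstacle here, since no dimension count or appeal to the non-degeneracy of the ambient $V$ is needed once the identification of $U\cap U^\perp$ with the radical of $U$ is made; the only point requiring care is to keep the two notions of ``orthogonal to $U$''---one phrased through membership in $U^\perp$, the other through the restricted form---clearly aligned. (If one instead wished to derive the stronger statement $V=U\oplus U^\perp$, one would additionally invoke the non-degeneracy of $V$ via the dimension formula $\dim U+\dim U^\perp=\dim V$, but that is not required for the stated equivalence.)
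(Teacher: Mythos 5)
Your proof is correct. The paper itself gives no proof of this lemma---it is quoted as a well-known fact from O'Neill's book---and your argument is exactly the standard one: the identity $U\cap U^\perp=\{u\in U:\langle u,u'\rangle=0\ \text{for all}\ u'\in U\}$ identifies $U\cap U^\perp$ with the radical of the restricted form, after which the equivalence is just the definition of non-degeneracy of $U$. Your closing observation is also accurate and worth keeping: non-degeneracy of the ambient space $V$ plays no role in this equivalence, and is needed only for the stronger conclusion $V=U\oplus U^\perp$ via the dimension formula $\dim U+\dim U^\perp=\dim V$.
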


The following lemma obtained in \cite{FuHou2010} is very useful.
\begin{Lemma}\cite{FuHou2010}\label{EMsLorSurfgf1f2}
Let $M$ be a Lorentzian surface in a semi Euclidean space $\mathbb E^m_s$. Then there exist local coordinates $(s,t)$ such that the induced metric is of the form of  
$$g=-m^2(dsdt+dsdt), \quad s\in I_1,\ t\in I_2,$$ where $m=m(s,t)$ is a non-vanishing function, $I_1$, $I_2$ are some open intervals. Moreover, the Levi-Civita connection of $M$ is given by
\begin{equation}
\label{EMsLorSurfgf1f2Levi} \nabla_{\partial_s}\partial_s= \frac{2m_s}{m}\partial_s,\quad\nabla_{\partial_s}\partial_t= 0,\quad\nabla_{\partial_t}\partial_t= \frac{2m_t}{m}\partial_t.
\end{equation}
\end{Lemma}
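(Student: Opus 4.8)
The statement is intrinsic: only the induced Lorentzian metric on the $2$-manifold $M$ enters, so I would forget the ambient $\mathbb E^m_s$ and argue entirely on $(M,\langle\ ,\ \rangle)$. The plan is to produce the coordinates from the two null line fields carried by a Lorentzian surface and then read off the connection from the Christoffel symbols.

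First I would fix a point and a connected coordinate neighborhood on which there is an orthonormal frame $\{e_1,e_2\}$ with $\langle e_1,e_1\rangle=-1$, $\langle e_2,e_2\rangle=1$. The two null directions are then spanned by the smooth, nowhere-vanishing vector fields $e_1\pm e_2$, giving two rank-one distributions $\mathcal L_1,\mathcal L_2$ that are everywhere transverse (distinct null lines). Being one-dimensional they are automatically integrable, so each determines a foliation of the patch by null curves; by Lemma \ref{WellKnownLemmaLghtlks} two linearly independent null vectors are never orthogonal, which is exactly the transversality that will make the two foliations rectifiable together.

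Next I would invoke the simultaneous flow-box theorem for two transverse one-dimensional foliations to obtain coordinates $(s,t)$ in which the leaves of $\mathcal L_1$ are $\{t=\mathrm{const}\}$ and those of $\mathcal L_2$ are $\{s=\mathrm{const}\}$; equivalently, in any local coordinates $(x,y)$ with $g=E\,dx^2+2F\,dx\,dy+G\,dy^2$ one solves the characteristic equation $Ea^2+2Fab+Gb^2=0$ for the two null directions $a\partial_x+b\partial_y$ (its discriminant $F^2-EG>0$ since $g$ is Lorentzian) and integrates them. In these coordinates $\partial_s\in\mathcal L_1$ and $\partial_t\in\mathcal L_2$, so $g_{ss}=\langle\partial_s,\partial_s\rangle=0$ and $g_{tt}=0$, while $g_{st}=\langle\partial_s,\partial_t\rangle\neq0$ again by Lemma \ref{WellKnownLemmaLghtlks}. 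Since $g_{st}$ is continuous and nowhere zero on a connected patch, after reversing one of the null fields if necessary it has constant negative sign, so I may write $g_{st}=-m^2$ with $m$ a smooth nonvanishing function; this is the asserted form $g=-m^2(ds\,dt+dt\,ds)$.

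Finally I would compute the connection directly. The inverse metric has components $g^{ss}=g^{tt}=0$ and $g^{st}=-m^{-2}$, and feeding $g_{ss}=g_{tt}=0$, $g_{st}=-m^2$ into $\Gamma^k_{ij}=\tfrac12 g^{kl}(\partial_i g_{jl}+\partial_j g_{il}-\partial_l g_{ij})$ leaves only $\Gamma^s_{ss}=2m_s/m$ and $\Gamma^t_{tt}=2m_t/m$ nonzero, which yields \eqref{EMsLorSurfgf1f2Levi}. The only genuine obstacle is the coordinate-existence step: producing smooth null coordinates simultaneously rectifying both null foliations on the patch. Everything after that is bookkeeping, and the vanishing of $g_{ss}$ and $g_{tt}$ makes the Christoffel computation collapse to the two displayed terms.
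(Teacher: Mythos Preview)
The paper does not supply its own proof of this lemma; it is merely quoted from \cite{FuHou2010} and used as a black box. Your argument is the standard one and is correct: on a Lorentzian $2$-manifold the two null line fields are smooth, transverse, and (being one-dimensional) integrable, so they admit local first integrals $s,t$ which together give coordinates with $g_{ss}=g_{tt}=0$; the sign of $g_{st}$ is fixed on a connected patch, and the Christoffel symbols collapse exactly as you computed. There is nothing to compare against, and no gap to flag.
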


\section{Minimal Lorentzian surfaces and their Gauss map}

\begin{Lemma}\label{LemmaDeltaf1f2}
Let $M$ be a  Lorentzian minimal surface. Then its tangent and normal Gauss map $\nu$ and $\mu$ satisfy
\begin{equation}
\label{LemmaDeltaf1f2EQ} \Delta \nu= 2K \nu+2K^D \mu,
\end{equation}
where $K$ and $K^D$ are the Gaussian and the normal curvature, respectively.
\end{Lemma}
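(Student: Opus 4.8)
The plan is to compute $\Delta\nu$ directly from \eqref{LorSurfDelta} by differentiating $\nu=f_1\wedge f_2$ twice along the null frame $\{f_1,f_2\}$, expanding every ambient derivative through the Gauss and Weingarten formulas \eqref{MEtomGauss}--\eqref{MEtomWeingarten} and then collapsing the result with the equations of Gauss, Codazzi and Ricci.

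First I would pin down the frame data. Since $\langle f_1,f_1\rangle=\langle f_2,f_2\rangle=0$ and $\langle f_1,f_2\rangle=-1$, metric compatibility of $\nabla$ forces each $\nabla_{f_i}f_1$ to be a multiple of $f_1$ and each $\nabla_{f_i}f_2$ a multiple of $f_2$; writing $\nabla_{f_1}f_1=\phi_1 f_1$ and $\nabla_{f_2}f_1=\phi_2 f_1$ one gets $\nabla_{f_1}f_2=-\phi_1 f_2$ and $\nabla_{f_2}f_2=-\phi_2 f_2$ (this is also what Lemma \ref{EMsLorSurfgf1f2} gives once $f_1,f_2$ are taken proportional to $\partial_s,\partial_t$). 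Minimality means $h(f_1,f_2)=-H=0$, so only $h_{11}:=h(f_1,f_1)$ and $h_{22}:=h(f_2,f_2)$ survive. A short computation of the first derivatives, in which the $\phi_i$ contributions combine with $f_1\wedge f_2$ and drop out, then yields the clean identities
$$f_1(\nu)=h_{11}\wedge f_2,\qquad f_2(\nu)=f_1\wedge h_{22}.$$

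Next I would feed these back into \eqref{LorSurfDelta}. Differentiating once more produces shape-operator terms $-f_1\wedge A_{h_{22}}f_1$ and $-A_{h_{11}}f_2\wedge f_2$, normal-derivative terms $f_1\wedge D_{f_1}h_{22}$ and $D_{f_2}h_{11}\wedge f_2$, the connection contributions coming from $-\nabla_{f_1}f_2$ and $-\nabla_{f_2}f_1$, and two copies of $h_{11}\wedge h_{22}$. Using \eqref{MinkAhhRelatedby} together with $h(f_1,f_2)=0$ one finds $A_{h_{22}}f_1=-Kf_2$ and $A_{h_{11}}f_2=-Kf_1$, where $K=\langle h_{11},h_{22}\rangle$ by the Gauss equation \eqref{MinkGaussEquation}; hence the shape-operator terms assemble precisely into $2K\,\nu$.

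The hard part will be showing that everything involving $\phi_1,\phi_2$ and the normal derivatives cancels. This is exactly where the Codazzi equation \eqref{MinkCodazzi} enters: evaluating it at $(f_1;f_2,f_2)$ and at $(f_2;f_1,f_1)$ and simplifying with $h(f_1,f_2)=0$ yields $D_{f_1}h_{22}=-2\phi_1 h_{22}$ and $D_{f_2}h_{11}=2\phi_2 h_{11}$, which kill the stray $\phi_i$-terms term by term. What survives is $2K\nu+2\,h_{11}\wedge h_{22}$. Writing $h_{11}=ae_3+be_4$ and $h_{22}=ce_3+de_4$ gives $h_{11}\wedge h_{22}=(ad-bc)\,\mu$, and the Ricci equation \eqref{MinkRicciEquation} identifies $ad-bc$ with the normal curvature $K^D=\langle R^D(f_1,f_2)e_3,e_4\rangle$. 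Substituting this yields $\Delta\nu=2K\nu+2K^D\mu$, as claimed.
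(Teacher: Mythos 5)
Your proposal is correct and follows essentially the same route as the paper: a direct computation of $\Delta\nu$ in the pseudo-orthogonal null frame via the Gauss--Weingarten formulas, cancellation of the connection and normal-derivative terms by the Codazzi equation, and identification of the surviving terms with $2K\nu+2K^D\mu$ through the Gauss and Ricci equations. The only (immaterial) difference is that the paper reads off the coefficients by computing $\langle\Delta\nu,\nu\rangle=-2K$ and $\langle\Delta\nu,\mu\rangle=2K^D$, whereas you evaluate $A_{h(f_2,f_2)}f_1$, $A_{h(f_1,f_1)}f_2$ and $h(f_1,f_1)\wedge h(f_2,f_2)$ explicitly.
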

\begin{proof}
Consider a pseudo-orthogonal base field $\{f_1,f_2\}$  of the tangent bundle of $M$. As $M$ is minimal, we have $H=0$ from which and \eqref{LorSurfH} we have $h(f_1,f_2)=0$. By a direct calculation, we obtain
\begin{align}\label{LemmaDeltaf1f2Ara1}
\begin{split}
\Delta \nu=&f_1f_2(f_1\wedge f_2)+f_2f_1(f_1\wedge f_2)-\nabla_{f_1}f_2(f_1\wedge f_2)-\nabla_{f_2}f_1(f_1\wedge f_2)\\
=&f_1\left(f_1\wedge h \left(f_2,f_2\right)\right)+f_2\left(h \left(f_1,f_1\right)\wedge f_2\right)+\zeta_1f_1\wedge h \left(f_2,f_2\right)-\zeta_2h \left(f_1,f_1\right)\wedge f_2\\
=&\zeta_1f_1\wedge h(f_2,f_2)+h(f_1,f_1)\wedge h(f_2,f_2)-f_1\wedge A_{h(f_2,f_2)}f_1\\
&+f_1\wedge D_{f_1}h(f_2,f_2)- A_{h(f_1,f_1)}f_2\wedge f_2+D_{f_2}h(f_1,f_1)\wedge f_2\\
&-\zeta_2h(f_1,f_1)\wedge f_2+h(f_1,f_1)\wedge h(f_2,f_2)+\zeta_1f_1\wedge h \left(f_2,f_2\right)-\zeta_2h \left(f_1,f_1\right)\wedge f_2\\
=&2h(f_1,f_1)\wedge h(f_2,f_2)-\Big(f_1\wedge A_{h(f_2,f_2)}f_1+A_{h(f_1,f_1)}f_2\wedge f_2\Big)\\
&+\Big(D_{f_2}h(f_1,f_1)-2\zeta_2h(f_1,f_1)\Big)\wedge f_2+f_1\wedge \Big(D_{f_1}h(f_2,f_2)+2\zeta_1D_{f_1}h(f_2,f_2)\Big),
\end{split}
\end {align}
where $\zeta_i$ is a function defined by $\nabla_{f_i}f_1=\zeta_i f_1$ for $i=1,2$. As $h(f_1,f_2)=0$, Codazzi equation  \eqref{MinkCodazzi} implies 
\begin{equation}\label{LemmaDeltaf1f2Ara2}
D_{f_2}h(f_1,f_1)-2\zeta_2h(f_1,f_1)=D_{f_1}h(f_2,f_2)+2\zeta_1 D_{f_1}h(f_2,f_2)=0.
\end{equation}
On the other hand, using Gauss and Ricci equations \eqref{MinkGaussEquation} and \eqref{MinkRicciEquation}, we obtain
\begin{eqnarray}
\label{LemmaDeltaf1f2Ara3} \langle\Delta\nu,\nu\rangle&=&-2K\\
\label{LemmaDeltaf1f2Ara4} \langle\Delta\nu,\mu\rangle&=&2K^D.
\end{eqnarray}
From \eqref{LemmaDeltaf1f2Ara1}-\eqref{LemmaDeltaf1f2Ara4} we obtain \eqref{LemmaDeltaf1f2EQ}.
\end{proof}

\begin{Prop}\label{PropE41MinLor1stkind}
There exist two families of Lorentzian minimal surfaces in the Minkowski space $E^4_1$ with pointwise 1-type Gauss map of the first kind.
\begin{enumerate}
\item[(i)] A minimal surface lying in a Lorentzian hyperplane of $\mathbb E^4_1$;
\item[(ii)] A surface with degenerated relative null bundle.
\end{enumerate}
Conversely, every Lorentzian minimal surface with pointwise 1-type Gauss map of the first kind in the Minkowski space $E^4_1$ is congruent to an open portion of a surface obtained from these type of surfaces.
\end{Prop}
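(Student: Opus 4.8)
The plan is to convert the pointwise 1-type condition into a curvature statement by means of Lemma \ref{LemmaDeltaf1f2}, and then read off the geometry. Throughout I work in the pseudo-orthogonal tangent frame $\{f_1,f_2\}$ and abbreviate $\xi_1=h(f_1,f_1)$, $\xi_2=h(f_2,f_2)$. Minimality gives $h(f_1,f_2)=0$, so the whole second fundamental form is carried by $\xi_1,\xi_2$, and these are spacelike because the normal bundle of a Lorentzian surface in $\mathbb E^4_1$ is spacelike. I would handle the converse first, since it contains the real work.

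First I would show that $\Delta\nu=f\nu$ forces $K^D\equiv0$. By Lemma \ref{LemmaDeltaf1f2} we have $\Delta\nu=2K\nu+2K^D\mu$, whence $(f-2K)\nu=2K^D\mu$. Since $\nu=f_1\wedge f_2$ lives in $\Lambda^2 T_pM$ while the normal Gauss map $\mu=e_3\wedge e_4$ lives in the nonzero space $\Lambda^2 N_pM$, the two are linearly independent; therefore $K^D\equiv0$ (and $f=2K$). Next I would translate $K^D=0$ into a condition on $\xi_1,\xi_2$: expanding $\xi_1,\xi_2$ in the orthonormal normal frame $\{e_3,e_4\}$ and computing the shape operators in the null frame shows that $A_{e_3}$ and $A_{e_4}$ are off-diagonal, and that their commutator $[A_{e_3},A_{e_4}]$ is diagonal with entries proportional to the determinant $\det(\xi_1,\xi_2)$ in the normal plane. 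By the Ricci equation \eqref{MinkRicciEquation} this means $K^D=0$ is equivalent to the pointwise linear dependence of $\xi_1$ and $\xi_2$.

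It then remains to split the dependence into its two genuine cases. If one of them vanishes, say $\xi_1=0\neq\xi_2$, then $h(f_1,\cdot)=0$, so $\mathcal N_p(M)=\mathrm{span}\{f_1\}$ is a null line; hence $M$ has degenerated relative null bundle, which is family (ii). If both are nonzero and parallel, write $\xi_i=\phi_i\zeta$ with $\zeta$ a unit normal and $\phi_1,\phi_2\neq0$, and let $\eta$ be the unit normal orthogonal to $\zeta$; then $A_\eta=0$. Substituting $A_\eta=0$ into the Codazzi equation \eqref{MinkCodazzi} and evaluating the resulting identity on the triples $(f_1,f_2,f_2)$ and $(f_2,f_1,f_1)$ forces the normal connection form of $\eta$ to vanish, so $D_X\eta=0$ for all $X$. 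Then $\eta$ is a constant spacelike vector of $\mathbb E^4_1$ and $M$ lies in the Lorentzian hyperplane orthogonal to $\eta$, which is family (i); the totally geodesic subcase $\xi_1=\xi_2=0$ is absorbed here as well.

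The forward direction is the easy part: a minimal surface inside a Lorentzian hyperplane has effective codimension one, so $\mathrm{span}\{\xi_1,\xi_2\}$ is at most one-dimensional and $\det(\xi_1,\xi_2)=0$; a surface with degenerated relative null bundle has one $\xi_i$ vanishing, so again $\det(\xi_1,\xi_2)=0$. In both cases $K^D=0$, and Lemma \ref{LemmaDeltaf1f2} yields $\Delta\nu=2K\nu$, i.e. pointwise 1-type Gauss map of the first kind with $f=2K$. I expect the reduction-of-codimension step in family (i) to be the main obstacle: one must upgrade the merely pointwise parallelism of $\xi_1$ and $\xi_2$ to a globally parallel normal direction, and it is precisely the Codazzi computation (using $\phi_1,\phi_2\neq0$) that supplies the constant vector $\eta$.
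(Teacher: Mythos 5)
Your proposal is correct, and its skeleton is the same as the paper's: Lemma \ref{LemmaDeltaf1f2} together with the linear independence of $\nu$ and $\mu$ forces $K^D\equiv 0$, this is converted into pointwise linear dependence of $h(f_1,f_1)$ and $h(f_2,f_2)$, and the dichotomy (one vanishes / both nonvanishing) yields family (ii), respectively family (i) via a parallel unit normal produced by Codazzi and made constant by Weingarten. Where you genuinely diverge is the execution of the hyperplane case. The paper works in the null coordinates of Lemma \ref{EMsLorSurfgf1f2}, introduces the functions $\psi_1=\langle h(\partial_s,\partial_s),h(\partial_s,\partial_s)\rangle$, $\psi_2=\langle h(\partial_t,\partial_t),h(\partial_t,\partial_t)\rangle$, and solves two initial value problems to reparametrize the coordinates so that $h(\partial_S,\partial_S)$ and $h(\partial_T,\partial_T)$ become unit; only then does $e_3:=h(\partial_S,\partial_S)=\pm h(\partial_T,\partial_T)$ satisfy $De_3=0$ by Codazzi, whence $A_4=0$ and $e_4$ is constant. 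You bypass the reparametrization entirely: writing $h(f_i,f_i)=\phi_i\zeta$ with $\phi_i\neq 0$, the $\eta$-components of the Codazzi identity on the triples $(f_2,f_1,f_1)$ and $(f_1,f_2,f_2)$ read $\phi_1\,\omega(f_2)=0$ and $\phi_2\,\omega(f_1)=0$, so the normal connection form $\omega$ vanishes and $\widetilde\nabla\eta=-A_\eta+D\eta=0$ directly. This is a real streamlining: it needs neither the conformal coordinates nor the ODE existence step (which in the paper is also the most error-prone passage), and it isolates what the normalization was actually for, namely the nonvanishing of both $\phi_i$. Two smaller differences: you prove the forward direction of family (i) directly from the codimension-one observation $K^D=0$, where the paper invokes \cite[Lemma 3.2]{UDur2}; and you share with the paper the same mild informality of treating ``one of the $h(f_i,f_i)$ vanishes identically'' versus ``both are nowhere zero'' as an exhaustive dichotomy, which is tolerable only because the conclusion concerns open portions of $M$ — worth a remark in either write-up.
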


\begin{proof}
First, we prove these type of surfaces given above has pointwise 1-type Gauss map.
If $M$ is a minimal surface lying in a Lorentzian hyperplane $\Pi$ of $\mathbb E^4_1$, then \cite[Lemma 3.2.]{UDur2} implies that $M$ has pointwise 1-type Gauss map (also see \cite{UDur}). 
Now, let  $M$ be a surface with degenerated relative null bundle. Then, there exists a local pseudo-orthonormal base field $\{f_1,f_2\}$ of the tangent bundle of $M$ such that $h(f_1,f_1)=h(f_1,f_2)=0$. Thus, $M$ is minimal and $h(f_1,f_1)\wedge h(f_2,f_2)=0$ from which and the equation $K^D e_3\wedge e_4=h(f_1,f_1)\wedge h(f_2,f_2)$ we obtain that $K^D\equiv 0$. Hence, \eqref{LemmaDeltaf1f2EQ} implies that $M$ has pointwise 1-type Gauss map.

Now, we want to prove the remaining part of the proposition.

Let $M$ be a Lorentzian  surface in $\mathbb E^4_1$ and $s,t$ be the local coordinates given in Lemma \ref{EMsLorSurfgf1f2}. Consider the  pseudo-orthogonal basis $\{f_1,f_2\}$ given by
$$f_1=\frac 1m \partial_s\quad\mbox{and}\quad f_2=\frac 1m \partial_t.$$
If we suppose that $M$ is minimal, i.e., $H\equiv0$, then \eqref{LorSurfH} implies that $h(f_1,f_2)=0.$ On the other hand, the Gauss map $\nu=f_1\wedge f_2$ of $M$ satisfies \eqref{LemmaDeltaf1f2EQ}.

Now, we assume that $M$ has pointwise 1-type Gauss map of the first kind. Then \eqref{PW1TypeDefinition} is satisfied for $C=0$. From \eqref{PW1TypeDefinition} and \eqref{LemmaDeltaf1f2EQ} we obtain $2K^D e_3\wedge e_4=0$ from which we get $h(f_1,f_1)\wedge h(f_2,f_2)=0$. Thus, $h(\partial_s,\partial_s)$ and $h(\partial_t,\partial_t)$ are linearly dependent. 

Let $x:I_1\times I_2 \rightarrow \mathbb E^4_1$ be an isometric immersion of $M$ and consider the functions 
$$\begin{array}{rcl}
\psi_1:I_1\times I_2&\rightarrow&\mathbb R\\
         (s_0,t_0)&\mapsto    &\langle h(\partial_s,\partial_s),h(\partial_s,\partial_s)\rangle\big|_{x(s_0,t_0)}
\end{array}
$$
and 
$$\begin{array}{rcl}
\psi_2:I_1\times I_2&\rightarrow&\mathbb R\\
         (s_0,t_0)&\mapsto    &\langle h(\partial_t,\partial_t),h(\partial_t,\partial_t)\rangle\big|_{x(s_0,t_0)}
\end{array}.
$$

\textit{Case (1):} $\psi_1\equiv0$ or $\psi_2\equiv0$. In this case, $M$ has degenerated relative null bundle.

\textit{Case (2):} $\psi_1\neq0$ and $\psi_2\neq0$. In this case, the initial value problems
$$\phi_1'=\psi_1(\phi_2)^{-1/4},\quad \phi_1(0)=s_0 $$
and 
$$\phi_2'=\psi_1(\phi_2)^{-1/4},\quad \phi_2(0)=t_0 $$
admit unique solutions, say $\phi_1$ and $\phi_2$, respecetively, where $s_0\in I_1$ and $t_0\in I_2$. Let $S,T$ be local coordinates given by $S=\phi_1(s)$ and $T=\phi_2(t)$. Then, we have 
$g=-\hat m^2(S,T)(dSdT+dTdS),$
where $\hat m(S,T)=m(\phi_1(s),\phi_2(t))$.
Moreover,  the normal vector fields $h(\partial_S,\partial_S)$ and $h(\partial_T,\partial_T)$ are linearly dependent and unit. Thus, we have 
\begin{equation}\label{TransSurfhsshtt}
h(\partial_S,\partial_S)=\pm h(\partial_T,\partial_T).
\end{equation}

Now, let $\{e_3,e_4\}$ be an orthonormal base field of normal bundle of $M$ with $e_3=h(\partial_S,\partial_S)$. From Codazzi equation \eqref{MinkCodazzi} we obtain $D_{\partial_T}h(\partial_S,\partial_S)=D_{\partial_S}h(\partial_T,\partial_T)=0$. Therefore \eqref{TransSurfhsshtt} implies that $D e_3=0$, i.e., $e_3$ is parallel. As $M$ has codimension of 2, $e_4$ is also parallel. Moreover, by using \eqref{MinkAhhRelatedby}, we obtain $A_4=0$ because of \eqref{TransSurfhsshtt}. Thus, we have $\widetilde\nabla e_4=0$, i.e., $e_4$ is constant. Hence, $M$ is contained in a hyperplane $\Pi$. As $e_4$ is space-like, $\Pi$ is Lorentzian.
\end{proof}
Next, we obtain the following proposition.
\begin{Prop}
Let $M$ be a Lorentzian minimal surface in $\mathbb E^4_1$. If $M$ has pointwise 1-type Gauss map, then it is of the first kind.
\end{Prop}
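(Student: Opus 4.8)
The plan is to show that the hypothesis forces the normal curvature $K^D$ to vanish identically; once $K^D\equiv 0$, Lemma~\ref{LemmaDeltaf1f2} gives $\Delta\nu=2K\nu=(2K)(\nu+0)$, which is exactly \eqref{PW1TypeDefinition} with $C=0$, so $M$ is of the first kind. Combining the pointwise $1$-type condition \eqref{PW1TypeDefinition} with \eqref{LemmaDeltaf1f2EQ} yields the pointwise identity $fC=(2K-f)\nu+2K^D\mu$. Suppose, for a contradiction, that $K^D\not\equiv0$, and let $W$ be an open set on which $K^D\neq0$. On $W$ the function $f$ cannot vanish: at a zero of $f$ the identity would read $2K\nu+2K^D\mu=0$, and since $\nu,\mu$ are orthogonal and non-null (indeed $\langle\nu,\nu\rangle=-1$ and $\langle\mu,\mu\rangle=1$) they are linearly independent, forcing $K^D=0$. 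Hence on $W$ the constant vector $C$ can be written as $C=a\nu+b\mu$ with $a=(2K-f)/f$ and $b=2K^D/f\neq0$.

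The key step is to differentiate this relation. Since $C$ is constant, $\widetilde\nabla_{f_i}C=0$ for $i=1,2$. I would compute the derivatives of the tangent and normal Gauss maps inside $\Lambda^2\mathbb E^4_1\cong\mathbb E^6_3$ by means of the Gauss and Weingarten formulas \eqref{MEtomGauss}, \eqref{MEtomWeingarten}. Writing $h(f_1,f_1)=\alpha_3e_3+\alpha_4e_4$ and $h(f_2,f_2)=\beta_3e_3+\beta_4e_4$ and using minimality ($h(f_1,f_2)=0$), one finds $\widetilde\nabla_{f_1}\nu=h(f_1,f_1)\wedge f_2$ and $\widetilde\nabla_{f_2}\nu=f_1\wedge h(f_2,f_2)$, and, after computing the shape operators $A_{e_3},A_{e_4}$ from \eqref{MinkAhhRelatedby}, that the $\widetilde\nabla_{f_i}\mu$ are likewise expressed through $\alpha_3,\alpha_4,\beta_3,\beta_4$. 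The essential structural point is that all of these derivatives lie in the span of the mixed bivectors $f_1\wedge e_3,\,f_1\wedge e_4,\,f_2\wedge e_3,\,f_2\wedge e_4$, which is a complement of $\operatorname{span}\{\nu,\mu\}$.

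Substituting into $0=\widetilde\nabla_{f_1}C=f_1(a)\nu+f_1(b)\mu+a\,\widetilde\nabla_{f_1}\nu+b\,\widetilde\nabla_{f_1}\mu$ and separating components, the $\nu$- and $\mu$-parts merely record that $a,b$ are constant, while the mixed-bivector parts give the homogeneous system $a\alpha_3+b\alpha_4=0$, $b\alpha_3-a\alpha_4=0$, whose coefficient matrix has determinant $-(a^2+b^2)$; the equation $\widetilde\nabla_{f_2}C=0$ yields the same system for $(\beta_3,\beta_4)$. Because $b\neq0$ on $W$, this determinant is nonzero, so $\alpha_3=\alpha_4=\beta_3=\beta_4=0$, i.e. $h\equiv0$ on $W$. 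But then $K^D e_3\wedge e_4=h(f_1,f_1)\wedge h(f_2,f_2)=0$ forces $K^D=0$ on $W$, contradicting the choice of $W$. Therefore $K^D\equiv0$ and $M$ is of the first kind. I expect the main obstacle to be the bookkeeping in the $\Lambda^2$-computation of $\widetilde\nabla_{f_i}\nu$ and $\widetilde\nabla_{f_i}\mu$; the conceptual heart, however, is the observation that the determinant of the resulting linear system is $\pm(a^2+b^2)$, which is nonzero exactly when $C$ has a nontrivial $\mu$-component, that is, precisely when $K^D\neq0$.
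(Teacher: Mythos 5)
Your proof is correct, and its first half coincides with the paper's own argument: both combine \eqref{PW1TypeDefinition} with Lemma \ref{LemmaDeltaf1f2} to place $C$ in $\mathrm{span}\{\nu,\mu\}$ (the paper's \eqref{THMFULLCLASSC}), and both then differentiate the constant vector $C$, using the fact that $\widetilde\nabla_{f_i}\nu$ and $\widetilde\nabla_{f_i}\mu$ lie in the span of the mixed bivectors $f_j\wedge e_\alpha$; your two homogeneous $2\times 2$ systems are exactly the paper's identities \eqref{THMFULLCLASSf1C}--\eqref{THMFULLCLASSf2C} written in components. Where you genuinely diverge is the endgame, and yours is tighter. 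The paper pairs \eqref{THMFULLCLASSf1C} with \eqref{THMFULLCLASSf2C} through an inner product to get the single scalar relation $(C_{12}^2+C_{34}^2)\langle h(f_1,f_1),h(f_2,f_2)\rangle=0$, then needs an adapted normal frame on the set where $h(f_1,f_1)\neq 0$ or $h(f_2,f_2)\neq 0$ to force $h=0$ there, and finally must quote Proposition \ref{PropE41MinLor1stkind} to convert ``degenerated relative null bundle'' into ``first kind'' and close a proof by contradiction on $C\neq 0$. You instead localize on $W=\{K^D\neq 0\}$, where $f\neq 0$ makes $b=2K^D/f\neq 0$ automatic, and the determinant $\pm(a^2+b^2)\neq 0$ kills $h$ on $W$ in one stroke, contradicting $K^D\neq 0$ there. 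This buys two things: your argument is self-contained modulo Lemma \ref{LemmaDeltaf1f2}, and it is direct rather than by contradiction on $C$ --- you prove $K^D\equiv 0$ and hence $\Delta\nu=2K\nu$, which is cleaner because ``satisfies \eqref{PW1TypeDefinition} with some $C\neq 0$'' and ``of the first kind'' are not formally exclusive (a harmonic Gauss map satisfies both), and because in the paper's terminal case one actually has $h\equiv 0$, where the relative null space is the whole Lorentzian (hence non-degenerate) tangent plane, so the literal hypothesis of Proposition \ref{PropE41MinLor1stkind} does not hold; your route sidesteps that wrinkle entirely. The only caveat is cosmetic: the signs in your $2\times 2$ systems (and your norms $\langle\nu,\nu\rangle=-1$, $\langle\mu,\mu\rangle=1$) depend on wedge and shape-operator conventions, but the determinant is $\pm(a^2+b^2)$ under any convention, so nothing in the conclusion is affected.
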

\begin{proof}
If $M$ is a Lorentzian minimal surface, then \eqref{LorSurfH} implies $h(f_1,f_2)=0$ from which and \eqref{MinkAhhRelatedby} we have $\langle A_3f_1,f_2\rangle=\langle A_4f_1,f_2\rangle=0$ for any pseudo-orthonormal frame field $\{f_1,f_2,e_3,e_4\}$. In addition, the Gauss map $\nu=f_1\wedge f_2$ of $M$ satisfies \eqref{LemmaDeltaf1f2EQ}.

Now, we assume that the Gauss map $\nu$ of $M$ satifies \eqref{PW1TypeDefinition} for $C\neq0$. From \eqref{PW1TypeDefinition} and \eqref{LemmaDeltaf1f2EQ}, we have 
\begin{equation}\label{THMFULLCLASSC}
C=C_{12}f_1\wedge f_2+C_{34}e_3\wedge e_4.
\end{equation}
As $C$ is a constant vector, we have $f_i(C)=0,\ i=1,2$ from which and \eqref{THMFULLCLASSC} we obtain $f_i(C_{12})=f_i(C_{34})=0$ and
\begin{subequations}\label{THMFULLCLASSfiCALL}
\begin{eqnarray}
\label{THMFULLCLASSf1C} C_{12}h(f_1,f_1)\wedge f_2&=&-C_{34}(-A_3f_1\wedge e_4+A_4f_1\wedge e_3 ), \\
\label{THMFULLCLASSf2C} C_{12}f_1\wedge h(f_2,f_2)&=&-C_{34}(-A_3f_2\wedge e_4+A_4f_2\wedge e_3 ).
\end{eqnarray}
\end{subequations}
Thus, $C_{12},\ C_{34}$ are constants. On the other hand, from \eqref{THMFULLCLASSfiCALL} we have 
\begin{equation}\label{THMFULLCLASSAra001}
C_{12}^2\langle h(f_1,f_1)\wedge f_2,f_1\wedge h(f_2,f_2) \rangle=C_{34}^2\langle -A_3f_1\wedge e_4+A_4f_1\wedge e_3,-A_3f_2\wedge e_4+A_4f_2\wedge e_3 \rangle.
\end{equation}
By a direct calculation, we have
\begin{subequations}\label{THMFULLCLASSAra002ALL}
\begin{eqnarray}
\langle h(f_1,f_1)\wedge f_2,f_1\wedge h(f_2,f_2) \rangle&=&\langle h(f_1,f_1),h(f_2,f_2) \rangle,\\
\langle A_3f_1\wedge e_4,A_4f_2\wedge e_3 \rangle&=&0,\\
\langle A_4f_1\wedge e_3,A_3f_2\wedge e_4&=&0,\\
\langle A_3f_1\wedge e_4,A_3f_2\wedge e_4 \rangle&=&-\langle h(f_1,f_1),e_3 \rangle\langle h(f_2,f_2),e_3 \rangle.\\
\langle A_4f_1\wedge e_3,A_4f_2\wedge e_3\rangle&=&-\langle h(f_1,f_1),e_4 \rangle\langle h(f_2,f_2),e_4 \rangle.
\end{eqnarray}
\end{subequations}

By combaining \eqref{THMFULLCLASSAra001}-\eqref{THMFULLCLASSAra002ALL}, we obtain
$$\left( C_{12}^2+C_{34}^2\right)\langle h(f_1,f_1),h(f_2,f_2) \rangle=0.$$
As $C\neq0$, from the last equation we have $h(f_1,f_1)$ and $h(f_2,f_2)$ are orthogonal. 

Consider the open subset $\mathcal U=\{p\in M| h(f_1,f_1)\neq0 \ \mbox{or} \ h(f_2,f_2)\neq0\}$ of $M$ is not empty and let $\{e_3,e_4\}$ be a local orthonormal base field of the normal bundle of $M$ such that $h(f_1,f_1)=\alpha_3 e_3$ {and} $h(f_2,f_2)=\alpha_4 e_4$ over $\mathcal U$, where $\alpha_3$ and $\alpha_4$  are some functions. From \eqref{THMFULLCLASSfiCALL}, we have
\begin{subequations}\nonumber
\begin{eqnarray}
\nonumber C_{12}\alpha_3f_2\wedge e_3&=&-C_{34}(-A_3f_1\wedge e_4+A_4f_1\wedge e_3 ), \\
\nonumber C_{12}\alpha_4f_1\wedge e_4&=&-C_{34}(-A_3f_2\wedge e_4+A_4f_2\wedge e_3 )
\end{eqnarray}
\end{subequations}
on $\mathcal U$. From these equations, we have $A_3f_1=A_4f_2=0$ on $\mathcal U$  which implies $h\Big|_\mathcal U=0$, because of \eqref{MinkAhhRelatedby}. However, this is a contradiction if  $\mathcal U$ is not empty. 

Therefore, we have $h(f_1,f_1)=0$ or $h(f_2,f_2)=0$ which yields that $M$ has degenerated relative null bundle. Thus, Proposition \ref{PropE41MinLor1stkind} implies $M$ has pointwise 1-type Gauss map of the first kind which leads a contradiction. 
\end{proof}

\begin{Prop}\label{THMDegNull}
Let $M$ be a Lorentzian surface in $\mathbb E^m_s$. Then $M$ has degenerated relative null bundle, if and only if it is congruent to the surface given by 
\begin{equation}\label{DegRelNullSpaceSurfPosVect}
x(s,t)=s\eta_0+\beta(t)
\end{equation}
where $\eta_0$ is a constant light-like vector and $\beta$ is a null curve in $\mathbb E^m_s$ with $\langle\eta_0,\beta(t)\rangle\neq0$.
\end{Prop}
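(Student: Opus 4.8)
The plan is to prove both implications separately, handling the reverse (sufficiency) direction by a direct computation and the forward (necessity) direction through the null coordinates supplied by Lemma~\ref{EMsLorSurfgf1f2}.

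For the reverse direction, suppose $x(s,t)=s\eta_0+\beta(t)$ with $\eta_0$ a constant light-like vector and $\beta$ a null curve. I would first record the coordinate frame $x_s=\eta_0$ and $x_t=\beta'(t)$. Since $\eta_0$ and $\beta'(t)$ are both light-like, the diagonal metric components $\langle x_s,x_s\rangle$ and $\langle x_t,x_t\rangle$ vanish, so the only surviving component is $\langle x_s,x_t\rangle=\langle\eta_0,\beta'(t)\rangle$; nondegeneracy of the resulting Lorentzian metric is therefore exactly the condition $\langle\eta_0,\beta'(t)\rangle\neq0$. Because $\eta_0$ is a constant vector of $\mathbb E^m_s$, both $\widetilde\nabla_{\partial_s}x_s=\partial_s\eta_0=0$ and $\widetilde\nabla_{\partial_t}x_s=\partial_t\eta_0=0$, so comparing tangential and normal parts in the Gauss formula \eqref{MEtomGauss} gives $h(\partial_s,\partial_s)=0$ and $h(\partial_s,\partial_t)=0$. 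Hence the light-like vector $x_s=\eta_0$ lies in $\mathcal N_p(M)$ at every point, which makes $(\mathcal N_p(M),\langle\ ,\ \rangle)$ degenerated, as required.

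For the forward direction, assume $M$ has degenerated relative null bundle and invoke Lemma~\ref{EMsLorSurfgf1f2} to obtain null coordinates $(s,t)$ in which $g=-m^2(ds\,dt+dt\,ds)$ and the Levi-Civita connection is given by \eqref{EMsLorSurfgf1f2Levi}. In these coordinates $\partial_s$ and $\partial_t$ span the two distinct light-like directions of the tangent plane. Since $\mathcal N_p(M)$ is one-dimensional and light-like, it must coincide with one of these two null line fields at each point; by smoothness and connectedness it is globally the same one, and after possibly interchanging $s$ and $t$ we may assume $\mathcal N_p(M)=\mathbb R\,\partial_s$, so that $h(\partial_s,\partial_s)=h(\partial_s,\partial_t)=0$, exactly as in the proof of Proposition~\ref{PropE41MinLor1stkind}. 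Substituting these vanishing terms into the Gauss formula \eqref{MEtomGauss} together with \eqref{EMsLorSurfgf1f2Levi} produces the system $x_{ss}=\tfrac{2m_s}{m}\,x_s$ and $x_{st}=0$. From $x_{st}=0$ the vector $x_s$ depends on $s$ alone, say $x_s=a(s)$; then $x_{ss}=a'(s)$ is parallel to $a(s)$, and since the left-hand side is $t$-independent while $a(s)\neq0$, the coefficient $\tfrac{2m_s}{m}$ is itself $t$-independent, so integrating the linear ordinary differential equation forces $a(s)=c(s)\eta_0$ for a constant vector $\eta_0$ and a scalar $c(s)$. Reparametrizing $s$ so that $c\equiv1$ and integrating in $s$ yields $x(s,t)=s\eta_0+\beta(t)$, where $\beta(t)$ is the $t$-dependent integration ``constant''; that $\partial_s,\partial_t$ are light-like gives $\langle\eta_0,\eta_0\rangle=0$ and $\langle\beta'(t),\beta'(t)\rangle=0$, while nonvanishing of $\langle x_s,x_t\rangle$ gives $\langle\eta_0,\beta'(t)\rangle\neq0$.

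I expect the principal obstacle to lie in the forward direction's alignment step, namely justifying that the relative null line field coincides globally with the single coordinate null field $\mathbb R\,\partial_s$ rather than switching between $\partial_s$ and $\partial_t$, and then arguing cleanly that $a'(s)\parallel a(s)$ with a $t$-independent left-hand side collapses to one fixed direction $\eta_0$. Each of these is short once properly set up, but they carry the real geometric content of the statement; everything else reduces to Gauss/Weingarten bookkeeping and an elementary integration.
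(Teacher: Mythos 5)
Your necessity direction is the paper's own argument, carried out in more detail: the paper likewise takes the null coordinates of Lemma~\ref{EMsLorSurfgf1f2}, uses Lemma~\ref{WellKnownLemmaNonDeg} to reduce to $\mathcal N_p(M)=\mathrm{span}\{f_1\}$ (your $\mathbb R\,\partial_s$), obtains $x_{ss}=\frac{2m_s}{m}x_s$ and $x_{st}=0$, and then simply says ``by integrating these equations and re-defining $s$ properly.'' Your integration step (the $t$-independence of $2m_s/m$, the collapse of $x_s$ onto a single fixed direction $\eta_0$, the reparametrization of $s$) is exactly the content compressed into that sentence, and it is correct. Incidentally, what actually comes out of it is $\langle\eta_0,\beta'(t)\rangle\neq0$, so the condition $\langle\eta_0,\beta(t)\rangle\neq0$ printed in the statement is a typo that you have silently corrected.

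The sufficiency direction is where you part company with the paper --- because the paper never proves it at all, despite the ``if and only if'' --- and your version of it has a genuine gap at the final inference. From $\widetilde\nabla x_s=0$ you correctly get $\eta_0\in\mathcal N_p(M)$, but from the light-likeness of $\eta_0$ you then conclude that $\mathcal N_p(M)$ is degenerated. By Lemma~\ref{WellKnownLemmaNonDeg}, a subspace containing a light-like vector is degenerate only if that vector is orthogonal to the \emph{whole} subspace. This holds when $\mathcal N_p(M)=\mathrm{span}\{\eta_0\}$, but fails when $\mathcal N_p(M)=T_pM$, which is Lorentzian and hence nondegenerate; the latter occurs exactly at points where also $h(\partial_t,\partial_t)=0$, i.e.\ where $\beta''(t)\in\mathrm{span}\{\eta_0,\beta'(t)\}$. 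Concretely, $\beta(t)=tv$ with $v$ light-like and $\langle\eta_0,v\rangle\neq0$ yields a Lorentzian plane whose relative null space at every point is all of $T_pM$, a counterexample to the literal ``if'' direction. So your argument (and the proposition itself) needs the additional hypothesis $h(\partial_t,\partial_t)\neq0$ --- equivalently, that $\beta''$ is never tangent to the surface --- or a restriction to points where $\mathcal N_p(M)$ is proper. This defect of the statement goes unnoticed in the paper precisely because this half of the equivalence is never argued there.
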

\begin{proof}
Let $M$ be a Lorentzian surface in $\mathbb E^m_s$, $x$ its position vector and $\{s,t\}$ some local coordinates given in Lemma \ref{EMsLorSurfgf1f2} satisfying \eqref{EMsLorSurfgf1f2Levi}. Consider the tangent vector fields $f_1=\frac 1m\partial_s$ and $f_2=\frac 1m\partial_t$.

Now, assume that $\mathcal N_p(M)$ is degenerated for all $p\in M$. Because of Lemma \ref{WellKnownLemmaNonDeg},  we may assume $\mathcal N_p(M)=\mbox{span} \{f_1\}$. which implies $h(f_1,f_1)=h(f_1,f_2)=0$. From these equations and \eqref{EMsLorSurfgf1f2Levi}
we have 
$\widetilde\nabla_{\partial_s}\partial_s=\nabla_{\partial_s}\partial_s$ {and} $\widetilde\nabla_{\partial_s}\partial_t=0$
from which we obtain
$x_{ss}=2\frac{m_s}{m}x_s$  {and}  $x_{st}=0$
By integrating these equations and re-defining $s$ properly, we obtain that $M$ is congruent to the surface given by \eqref{DegRelNullSpaceSurfPosVect}. 
\end{proof}

By combaining all of the results given in this section, we state
\begin{theorem}\label{L4MinimalFullCLASSTHEOPW1Type}
Let $M$ be a Lorentzian minimal surface in $\mathbb E^4_1$. Also suppose that no open part of $M$ is contained in a hyperplane of $\mathbb E^4_1$. Then, the following conditions are logically equivalent
\begin{enumerate}
\item[(i)]   $M$ has pointwise 1-type Gauss map;
\item[(ii)]   $M$ has pointwise 1-type Gauss map of the first kind;
\item[(iii)]  $M$ has harmonic Gauss map;
\item[(iv)] $M$  has degenerated relative null bundle;
\item[(v)]  $M$  has flat normal bundle;
\item[(vi)] $M$  is congruent to the surface given by \eqref{DegRelNullSpaceSurfPosVect}
for  a constant light-like vector  $\eta_0\in\mathbb E^4_1$ and  a null curve $\beta$ in $\mathbb E^4_1$ satisfying $\langle\eta_0,\beta(t)\rangle\neq0$.
\end{enumerate}
\end{theorem}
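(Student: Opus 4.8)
The plan is to prove all six conditions equivalent by establishing a single cycle of implications together with the independent equivalence (iv)~$\Leftrightarrow$~(vi). The computational backbone is Lemma~\ref{LemmaDeltaf1f2}, which for any Lorentzian minimal surface reads $\Delta\nu = 2K\nu + 2K^D\mu$. Since the tangent and normal Gauss maps $\nu = f_1\wedge f_2$ and $\mu = e_3\wedge e_4$ span independent directions in $\wedge^2\mathbb E^4_1$ (indeed $\langle\nu,\mu\rangle=0$ because the tangent and normal spaces are orthogonal), any identity of the form $a\nu + b\mu = 0$ decouples into the two scalar conditions $a=0$, $b=0$. This is what lets me read off $K$ and $K^D$ separately from any prescribed form of $\Delta\nu$.

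First I would dispose of the implications that follow immediately from the preceding results. The proposition stating that a pointwise 1-type Gauss map is automatically of the first kind gives (i)~$\Rightarrow$~(ii) at once. For (iii)~$\Rightarrow$~(v), harmonicity $\Delta\nu=0$ forces, via Lemma~\ref{LemmaDeltaf1f2} and the decoupling above, both $K\equiv0$ and $K^D\equiv0$; in particular the normal bundle is flat (in codimension two, flatness of the normal bundle is equivalent to $K^D\equiv0$). For (v)~$\Rightarrow$~(i), flatness again means $K^D\equiv0$, so Lemma~\ref{LemmaDeltaf1f2} returns $\Delta\nu = 2K\nu$, which is a pointwise 1-type Gauss map of the first kind, a fortiori of pointwise 1-type. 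Finally (iv)~$\Leftrightarrow$~(vi) is precisely Proposition~\ref{THMDegNull}, which requires neither minimality nor the hyperplane hypothesis.

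The two substantive links are (ii)~$\Rightarrow$~(iv) and (iv)~$\Rightarrow$~(iii), and this is where the hypothesis that no open part of $M$ lies in a hyperplane enters. For (ii)~$\Rightarrow$~(iv) I would invoke the converse half of Proposition~\ref{PropE41MinLor1stkind}: a Lorentzian minimal surface with pointwise 1-type Gauss map of the first kind is congruent to an open portion of one of exactly two model families, namely a minimal surface inside a Lorentzian hyperplane or a surface with degenerated relative null bundle. The standing assumption rules out the first family on every open set, forcing $M$ to have degenerated relative null bundle. For (iv)~$\Rightarrow$~(iii) I would pass to the frame adapted to the null bundle, in which $h(f_1,f_1)=h(f_1,f_2)=0$; the Gauss equation~\eqref{MinkGaussEquation} then gives $K=\langle h(f_1,f_1),h(f_2,f_2)\rangle=0$, while the relation $K^D e_3\wedge e_4 = h(f_1,f_1)\wedge h(f_2,f_2)$ gives $K^D=0$, so Lemma~\ref{LemmaDeltaf1f2} yields $\Delta\nu=0$. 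Chaining (i)~$\Rightarrow$~(ii)~$\Rightarrow$~(iv)~$\Rightarrow$~(iii)~$\Rightarrow$~(v)~$\Rightarrow$~(i) then closes the cycle, and (iv)~$\Leftrightarrow$~(vi) completes the statement.

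The main obstacle is conceptual rather than computational. Conditions (ii) and (v) encode only $K^D\equiv0$, whereas (iii) encodes the strictly stronger pair $K\equiv K^D\equiv0$, so on the face of it harmonicity ought to be a proper strengthening of the pointwise 1-type condition. The theorem asserts they coincide, and the reason is that for these surfaces flat normal bundle cannot occur ``generically'': by Proposition~\ref{PropE41MinLor1stkind} the condition $K^D\equiv0$ forces either the excluded hyperplane case or the degenerated relative null bundle, and the latter automatically annihilates $K$ as well. Thus the point I would take care to justify cleanly is that the no-hyperplane hypothesis is exactly what upgrades $K^D\equiv0$ to the full degenerate-null-bundle structure, and hence to $K\equiv0$; once this is secured, every remaining implication is mechanical.
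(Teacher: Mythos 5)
Your proposal is correct and follows essentially the same route as the paper, which gives no explicit proof but simply states that the theorem follows ``by combining all of the results given in this section'': your cycle (i)~$\Rightarrow$~(ii)~$\Rightarrow$~(iv)~$\Rightarrow$~(iii)~$\Rightarrow$~(v)~$\Rightarrow$~(i), together with (iv)~$\Leftrightarrow$~(vi), is precisely the intended combination of the unnamed proposition (first kind), Proposition~\ref{PropE41MinLor1stkind} (with the no-hyperplane hypothesis eliminating the first family), Proposition~\ref{THMDegNull}, and Lemma~\ref{LemmaDeltaf1f2} with the $\nu$--$\mu$ decoupling. Your closing observation that the degenerate relative null bundle forces $K\equiv0$ as well as $K^D\equiv0$ is exactly the mechanism that makes harmonicity equivalent to the a priori weaker conditions.
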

We also want to state the following corollary of this theorem.
\begin{Corol}
A Lorentzian minimal surface in $\mathbb E^4_1$ has proper pointwise 1-type Gauss map if and only if it lies in an Lorentzian hyperplane of $\mathbb E^4_1$ and it has non-constant Gaussian curvature.
\end{Corol}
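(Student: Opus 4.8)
The plan is to deduce the corollary from the classification in Proposition~\ref{PropE41MinLor1stkind} and Lemma~\ref{LemmaDeltaf1f2}, once the word \emph{proper} is unwound. Recall that a pointwise $1$-type Gauss map $\Delta\nu=f(\nu+C)$ is called proper exactly when the function $f$ is non-constant. The proposition preceding Theorem~\ref{L4MinimalFullCLASSTHEOPW1Type}, which asserts that a Lorentzian minimal surface with pointwise $1$-type Gauss map is automatically of the first kind, guarantees $C=0$, so $\Delta\nu=f\nu$. Comparing this with Lemma~\ref{LemmaDeltaf1f2}, namely $\Delta\nu=2K\nu+2K^D\mu$, and using that $\nu=f_1\wedge f_2$ and $\mu=e_3\wedge e_4$ satisfy $\langle\nu,\mu\rangle=0$, $\langle\mu,\mu\rangle=1$, $\langle\nu,\nu\rangle=-1$ in $\mathbb E^6_3$, I would take inner products with $\mu$ and with $\nu$ to isolate $K^D\equiv0$ and $f=2K$. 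Thus ``proper'' is precisely the condition that $K$ be non-constant, which already matches the Gaussian-curvature clause of the right-hand side.

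For the forward implication, I would invoke Proposition~\ref{PropE41MinLor1stkind}: a Lorentzian minimal surface with pointwise $1$-type Gauss map of the first kind is congruent to an open portion of either a minimal surface in a Lorentzian hyperplane (case~(i)) or a surface with degenerated relative null bundle (case~(ii)). The decisive step is to eliminate case~(ii) under properness. On such a surface one may choose the frame so that $h(f_1,f_1)=h(f_1,f_2)=0$, and the Gauss equation \eqref{MinkGaussEquation} then gives $K=R(f_1,f_2,f_2,f_1)=\langle h(f_2,f_2),h(f_1,f_1)\rangle-\langle h(f_1,f_2),h(f_1,f_2)\rangle=0$, so $f=2K\equiv0$ is constant, contradicting properness. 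Hence only case~(i) survives, $M$ lies in a Lorentzian hyperplane, and its Gaussian curvature is non-constant by the first paragraph.

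For the converse, suppose $M$ lies in a Lorentzian hyperplane $\Pi\cong\mathbb E^3_1$ and has non-constant Gaussian curvature. The normal bundle of $M$ in $\mathbb E^4_1$ splits off the constant unit normal $e_4$ of $\Pi$, whence $A_4=0$ and the Ricci equation \eqref{MinkRicciEquation} gives $K^D=\langle[A_3,A_4]f_1,f_2\rangle=0$; alternatively one cites \cite[Lemma 3.2.]{UDur2} exactly as in the proof of Proposition~\ref{PropE41MinLor1stkind} to obtain the pointwise $1$-type property directly. In either case Lemma~\ref{LemmaDeltaf1f2} yields $\Delta\nu=2K\nu$, so $f=2K$, and since $K$ is non-constant the Gauss map is of proper pointwise $1$-type, completing the equivalence.

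The only delicate point is the bookkeeping of the first paragraph, where one must be certain that the orthogonal decomposition $\Delta\nu=2K\nu+2K^D\mu$ genuinely forces $f=2K$ and $K^D=0$ with no residual constant vector $C$; this is precisely what the preceding proposition secures. With that in hand the remainder is a direct reading of the classification, and the single nontrivial computation is the vanishing $K\equiv0$ on a degenerated relative null bundle.
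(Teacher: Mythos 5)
Your proposal is correct and is essentially the paper's own (implicit) argument: the paper states this corollary without proof as a consequence of the section's results, and your derivation --- invoking the proposition that pointwise $1$-type forces the first kind to get $C=0$, reading off $f=2K$ and $K^D\equiv 0$ from Lemma~\ref{LemmaDeltaf1f2}, and eliminating case~(ii) of Proposition~\ref{PropE41MinLor1stkind} because $h(f_1,f_1)=h(f_1,f_2)=0$ forces $K\equiv 0$ and hence constant $f$ --- is exactly the combination of those results that underlies Theorem~\ref{L4MinimalFullCLASSTHEOPW1Type}. Your write-up simply makes explicit what the paper leaves as ``a corollary of this theorem,'' including the converse via the vanishing of $K^D$ for surfaces in a Lorentzian hyperplane.
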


\section{Minimal surfaces with 2-type Gauss map}
\begin{Lemma}\label{LemmaDeltaSQRf1f2} 
Let $M$ be a  Lorentzian minimal surface. Then its tangent and normal Gauss map $\nu$ and $\mu$ satisfy
\begin{equation}\label{LemmaDeltaSQRf1f2EQ}
\Delta^2\nu=2\Big(\Delta K+2K^2-2{K^D}^2\Big)\nu+2\Big(\Delta K^D+4KK^D\Big)\mu -4(\nabla K)(\nu)-4(\nabla K^D)(\mu)
\end{equation}
where $K$ and $K^D$ are the Gaussian and the normal curvature, respectively.
\end{Lemma}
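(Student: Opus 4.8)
The plan is to obtain \eqref{LemmaDeltaSQRf1f2EQ} by applying the Laplace operator once more to the formula \eqref{LemmaDeltaf1f2EQ} of Lemma \ref{LemmaDeltaf1f2}. Since $\Delta\nu = 2K\nu + 2K^D\mu$, linearity together with the product rule \eqref{Deltafu} gives
\[
\Delta^2\nu = 2\big[(\Delta K)\nu + K\Delta\nu - 2(\nabla K)(\nu)\big] + 2\big[(\Delta K^D)\mu + K^D\Delta\mu - 2(\nabla K^D)(\mu)\big].
\]
Thus the only new ingredient I need is a closed expression for the Laplacian of the normal Gauss map $\mu = e_3\wedge e_4$; everything else is a substitution of the already known $\Delta\nu$ followed by collecting the coefficients of $\nu$ and $\mu$.

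First I would establish the companion identity
\[
\Delta\mu = -2K^D\nu + 2K\mu,
\]
by repeating, for $\mu = e_3\wedge e_4$, the direct computation carried out for $\nu$ in the proof of Lemma \ref{LemmaDeltaf1f2}. Starting from \eqref{LorSurfDelta} and expanding $\Delta(e_3\wedge e_4)$ via the Weingarten formula \eqref{MEtomWeingarten}, the first covariant derivatives of $e_3$ and $e_4$ produce shape-operator terms, which are mixed tangent-normal bivectors, and normal-connection terms, which are multiples of $\mu$. The crucial point is that, after differentiating a second time and using the minimality hypothesis $h(f_1,f_2) = 0$ together with the Codazzi equation \eqref{MinkCodazzi}, the mixed bivectors cancel, so that $\Delta\mu$ lies in $\mathrm{span}\{\nu,\mu\}$. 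Its coefficients are then read off from the two inner products $\langle\Delta\mu,\nu\rangle$ and $\langle\Delta\mu,\mu\rangle$, which the Gauss equation \eqref{MinkGaussEquation} and the Ricci equation \eqref{MinkRicciEquation} evaluate to $2K^D$ and $2K$ respectively, exactly as \eqref{LemmaDeltaf1f2Ara3} and \eqref{LemmaDeltaf1f2Ara4} do in the tangent case.

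With both $\Delta\nu = 2K\nu + 2K^D\mu$ and $\Delta\mu = -2K^D\nu + 2K\mu$ in hand, I would substitute them into the displayed expression for $\Delta^2\nu$. The term $2K\Delta\nu$ contributes $4K^2\nu + 4KK^D\mu$ and the term $2K^D\Delta\mu$ contributes $-4{K^D}^2\nu + 4KK^D\mu$; collecting the coefficients of $\nu$ gives $2(\Delta K + 2K^2 - 2{K^D}^2)$, while those of $\mu$ give $2(\Delta K^D + 4KK^D)$, and the gradient terms are already in the required form. This yields \eqref{LemmaDeltaSQRf1f2EQ}.

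I expect the main obstacle to be the verification that $\Delta\mu$ has no surviving tangent-normal component: this is the one place where the minimality assumption and the Codazzi equation must be used in tandem, and it is the analogue of the cancellation of the Codazzi terms \eqref{LemmaDeltaf1f2Ara2} in Lemma \ref{LemmaDeltaf1f2}. Once that structural fact is secured, the remaining computation of the coefficients via the Gauss and Ricci equations and the final bookkeeping are routine.
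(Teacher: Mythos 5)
Your proposal is correct, but there is nothing in the paper to compare it against: Lemma \ref{LemmaDeltaSQRf1f2} is stated there with no proof at all, so your argument supplies the missing justification rather than paralleling an existing one. Your route is surely the intended one, since the shape of \eqref{LemmaDeltaSQRf1f2EQ} is exactly what the product rule \eqref{Deltafu} produces when applied to \eqref{LemmaDeltaf1f2EQ}; I checked the bookkeeping: with $\Delta\mu=-2K^D\nu+2K\mu$, the terms $2K\Delta\nu=4K^2\nu+4KK^D\mu$ and $2K^D\Delta\mu=-4{K^D}^2\nu+4KK^D\mu$ combine to give precisely the coefficients $2(\Delta K+2K^2-2{K^D}^2)$ and $2(\Delta K^D+4KK^D)$. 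The companion identity for $\Delta\mu$, which is your only genuinely new ingredient, is also correct, including its signs: since $\langle\nu,\nu\rangle=-1$ and $\langle\mu,\mu\rangle=1$ in the paper's conventions, the inner products $\langle\Delta\mu,\nu\rangle=2K^D$ and $\langle\Delta\mu,\mu\rangle=2K$ do yield the coefficient $-2K^D$ on $\nu$, mirroring how \eqref{LemmaDeltaf1f2Ara3} with $\langle\Delta\nu,\nu\rangle=-2K$ yields $+2K\nu$ in Lemma \ref{LemmaDeltaf1f2}. Your structural claim about the mixed tangent--normal terms also holds: writing $f_i\mu=-A_3f_i\wedge e_4-e_3\wedge A_4f_i$ (the normal-connection contributions vanish identically at first order, since $D_{f_i}e_3\wedge e_4=e_3\wedge D_{f_i}e_4=0$, a small imprecision in your sketch but a harmless one), the second differentiation produces mixed bivectors of the form $f_1\wedge\widetilde{\left(D_{f_1}h(f_2,f_2)+\zeta_1 h(f_2,f_2)\right)}$, where $\widetilde{\ }$ denotes the quarter-turn $\xi^3e_3+\xi^4e_4\mapsto\xi^4e_3-\xi^3e_4$ in the normal plane; because this rotation commutes with $D$, the Codazzi relation \eqref{LemmaDeltaf1f2Ara2} (which, note, contains a typo in the paper and should read $D_{f_1}h(f_2,f_2)+2\zeta_1h(f_2,f_2)=0$) cancels these against the $-\nabla_{f_1}f_2\mu$ and $-\nabla_{f_2}f_1\mu$ terms of \eqref{LorSurfDelta}, exactly as you anticipated. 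If you write this up, the one step worth making fully explicit is that cancellation, since it is where minimality enters for $\mu$.
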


Let $M$ be a Lorentzian surface in the Minkowski space $\mathbb E^4_1$ and $\nu$ and $\mu$ its tangent and normal Gauss map. We assume that $\nu$ satisfies 
\begin{equation}\label{Delta2nufDnu}
\Delta^2\nu=f\Delta\nu
\end{equation}
for a smooth fuction $f$. We  note that 
\begin{equation}\label{KKDinLambda}
(\nabla K)\nu),(\nabla K^D)(\mu)\in \Lambda^\perp,
\end{equation}
 where $\Lambda=\mathrm{span}\{\nu,\mu\}.$ From \eqref{LemmaDeltaf1f2EQ} and \eqref{LemmaDeltaSQRf1f2EQ}-\eqref{KKDinLambda} we obtain
\begin{subequations}\label{Delta2nufDnuEqAll}
\begin{eqnarray}
\label{Delta2nufDnuEq01} \Delta K+2K^2-2{K^D}^2&=&fK,\\
\label{Delta2nufDnuEq02} \Delta K^D+4KK^D&=&fK^D,\\
\label{Delta2nufDnuEq03} (\nabla K)(\nu)+(\nabla K^D)(\mu)&=&0.
\end{eqnarray}
\end{subequations}
             
Now, we consider a pseudo orthogonal frame field $\{f_1,f_2,e_3,e_4\}$ on $M$. Then we have 
\begin{align}\nonumber
\begin{split}\nonumber
(\nabla K)(\nu)+(\nabla K^D)(\mu)=&\Big(f_1(K)h^3_{22}-f_1(K^D)h^4_{22}\Big)f_1\wedge e_3+ \Big(f_1(K)h^4_{22}+f_1(K^D)h^3_{22}\Big)f_1\wedge e_4\\
&-\Big(f_2(K)h^3_{11}+f_2(K^D)h^4_{11}\Big)f_2\wedge e_3- \Big(f_2(K)h^4_{11}-f_2(K^D)h^3_{11}\Big)f_2\wedge e_4
\end{split}
\end{align}
from which and \eqref{Delta2nufDnuEq03} we have 
\begin{subequations}\label{Delta2nufDnuEq02ALL}
\begin{eqnarray}
\label{Delta2nufDnuEq02a}\left(\begin{array} {cc}
h^3_{22}& -h^4_{22}\\
h^4_{22}&h^3_{22}
\end{array}\right)
\left(\begin{array} {cc}
f_1(K)\\
f_1(K^D)
\end{array}\right)
&=&0\\
\label{Delta2nufDnuEq02b}
\left(\begin{array} {cc}
h^3_{11}&h^4_{11}\\
h^4_{11}&-h^3_{11}
\end{array}\right)
\left(\begin{array} {c}
f_2(K)\\
f_2(K^D)
\end{array}\right)
&=&0
\end{eqnarray}
\end{subequations}

Now, consider the open subset $\mathcal U=\{p| \nabla K\neq0 \mbox{ or } \nabla K^D\neq0\}$ of $M$ and assume that it is non-empty, i.e. either $K$ or $K^D$ is non-constant. From \eqref{Delta2nufDnuEq02ALL}, we have either $({h^3_{11}})^2+({h^4_{11}})^2=0$ or $({h^3_{22}})^2+({h^4_{22}})^2=0$ on $\mathcal U$. Thus, we have $\langle h(f_1,f_1), h(f_2,f_2)\rangle=0$ and  $\langle [A_3,A_4]f_!,f_2 \rangle=0$ on $\mathcal U$. From these equations, \eqref{MinkGaussEquation} and \eqref{MinkRicciEquation} we obtain $K=K^D=0$ on $\mathcal U$ which yields a contradiction.

Therefore, we have $K$ and $K^D$ are constants. Note that if $K^D=0$, then \eqref{LemmaDeltaf1f2EQ} implies that $M$ has 1-type Gauss map. Therefore, from \eqref{Delta2nufDnuEq02} we have $f=4K$. Next, we put this equation into \eqref{Delta2nufDnuEq01} and obtain $K=K^D=0$. Hence, \eqref{LemmaDeltaf1f2EQ} implies $\Delta\nu=0.$ Hence, we have
\begin{theorem}
Let $M$ be a Lorentzian minimal surface in the Minkowski space $\mathbb E^4_1$. If the Gauss map $\nu$ of $M$ satisfies \eqref{Delta2nufDnu}, then $f$ is constant and $M$ is  1-type.
\end{theorem}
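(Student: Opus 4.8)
The plan is to turn the hypothesis \eqref{Delta2nufDnu} into a closed system of scalar equations by decomposing everything along the plane $\Lambda=\mathrm{span}\{\nu,\mu\}$ and its orthogonal complement, and then to prove a rigidity statement, namely that both curvatures $K$ and $K^D$ must be constant. First I would substitute the first-order identity \eqref{LemmaDeltaf1f2EQ} and the second-order identity \eqref{LemmaDeltaSQRf1f2EQ} into $\Delta^2\nu=f\Delta\nu$. Since $(\nabla K)(\nu)$ and $(\nabla K^D)(\mu)$ lie in $\Lambda^\perp$ by \eqref{KKDinLambda}, matching the $\nu$-component, the $\mu$-component and the $\Lambda^\perp$-component separately produces exactly the three relations \eqref{Delta2nufDnuEq01}, \eqref{Delta2nufDnuEq02} and \eqref{Delta2nufDnuEq03}. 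The last relation is purely a statement about the gradients of $K$ and $K^D$, and expanding $(\nabla K)(\nu)+(\nabla K^D)(\mu)$ in a pseudo-orthogonal frame $\{f_1,f_2,e_3,e_4\}$ recasts \eqref{Delta2nufDnuEq03} as the two homogeneous linear systems \eqref{Delta2nufDnuEq02a} and \eqref{Delta2nufDnuEq02b} for the directional derivatives of $K$ and $K^D$.

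The heart of the argument, and the step I expect to be the main obstacle, is the constancy of $K$ and $K^D$. I would argue by contradiction on the open set $\mathcal U=\{p:\nabla K\neq0 \text{ or } \nabla K^D\neq0\}$, assuming it to be nonempty. The coefficient matrices in \eqref{Delta2nufDnuEq02a} and \eqref{Delta2nufDnuEq02b} have determinants $(h^3_{22})^2+(h^4_{22})^2$ and $-(h^3_{11})^2-(h^4_{11})^2$; since at each point of $\mathcal U$ some derivative of $K$ or $K^D$ is nonzero, at least one of these matrices is singular there, which forces $h(f_1,f_1)=0$ or $h(f_2,f_2)=0$. In either case one obtains $\langle h(f_1,f_1),h(f_2,f_2)\rangle=0$ together with $\langle[A_3,A_4]f_1,f_2\rangle=0$ on $\mathcal U$, and then the Gauss equation \eqref{MinkGaussEquation} and the Ricci equation \eqref{MinkRicciEquation} yield $K=K^D=0$ on $\mathcal U$. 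But this makes both gradients vanish on $\mathcal U$, contradicting its very definition, so $\mathcal U=\emptyset$ and both curvatures are constant. The delicate point is that the singularity of the matrices must be extracted pointwise and paired with whichever derivative is nonvanishing, and that the resulting degeneracy of the second fundamental form must be fed correctly through the Gauss and Ricci equations to collapse both $K$ and $K^D$.

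With $K$ and $K^D$ constant the remainder is a short algebraic case analysis. If $K^D=0$, then \eqref{LemmaDeltaf1f2EQ} reduces to $\Delta\nu=2K\nu$ with $K$ constant, so $\nu$ is an eigenvector of $\Delta$ and $M$ has $1$-type Gauss map; moreover $\Delta^2\nu=2K\,\Delta\nu$, so $f=2K$ is constant, the value being $0$ in the harmonic subcase $K=0$. If instead $K^D\neq0$, I would divide \eqref{Delta2nufDnuEq02} by $K^D$ (using $\Delta K^D=0$) to get $f=4K$, and then substitute into \eqref{Delta2nufDnuEq01} (using $\Delta K=0$) to obtain $2K^2-2{K^D}^2=4K^2$, i.e. $K=K^D=0$, which contradicts $K^D\neq0$. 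Hence only the first case survives, $f$ is constant, and \eqref{LemmaDeltaf1f2EQ} exhibits $\nu$ as a $1$-type map, completing the proof.
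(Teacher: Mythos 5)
Your proposal is correct and follows essentially the same route as the paper's own proof: the same decomposition of $\Delta^2\nu=f\Delta\nu$ along $\Lambda=\mathrm{span}\{\nu,\mu\}$ and $\Lambda^\perp$ into the three scalar equations, the same contradiction argument on the open set $\mathcal U$ (using the singularity of the coefficient matrices to kill $h(f_1,f_1)$ or $h(f_2,f_2)$ and then the Gauss and Ricci equations to force $K=K^D=0$ there), and the same final algebra splitting on whether $K^D$ vanishes. Your write-up is a bit more explicit than the paper's (naming the determinants and phrasing the $K^D\neq 0$ case as a contradiction), but the content is identical.
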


\begin{Corol}
There exists no Lorentzian minimal surface with biharmonic Gauss map in the Minkowski space $\mathbb E^4_1$.
\end{Corol}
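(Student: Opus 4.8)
The plan is to obtain the Corollary as the degenerate instance $f\equiv 0$ of the Theorem just established. Recall that the Gauss map $\nu$ is biharmonic precisely when $\Delta^2\nu=0$, and this is nothing but equation \eqref{Delta2nufDnu} with the smooth function $f$ taken identically zero. Consequently any Lorentzian minimal surface with biharmonic Gauss map already satisfies the hypothesis of the preceding Theorem, and the entire apparatus built in \eqref{Delta2nufDnuEqAll}--\eqref{Delta2nufDnuEq02ALL} transfers without change. So rather than starting a fresh computation, I would simply specialize the chain of identities to $f=0$ and track the conclusion.

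The key steps, in order, are as follows. First I would reuse the open-set argument verbatim: the analysis of \eqref{Delta2nufDnuEq02ALL} on $\mathcal U=\{p\mid \nabla K\neq0\text{ or }\nabla K^D\neq0\}$ forces $K=K^D=0$ on $\mathcal U$, which is impossible, so $\mathcal U=\emptyset$ and both $K$ and $K^D$ are constant; this step does not involve $f$ at all. Next, with $K$ and $K^D$ constant we have $\Delta K=\Delta K^D=0$, so setting $f=0$ collapses \eqref{Delta2nufDnuEq01} and \eqref{Delta2nufDnuEq02} to the purely algebraic system $K^2=(K^D)^2$ and $KK^D=0$. Solving this, either $K=0$ (forcing $K^D=0$) or $K^D=0$ (forcing $K=0$); in all cases $K=K^D=0$. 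Feeding this back into \eqref{LemmaDeltaf1f2EQ} yields $\Delta\nu=0$, i.e.\ the Gauss map is in fact \emph{harmonic}. This is exactly the reduction contained in the Theorem, and it is the substance of the Corollary.

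The single point requiring care is terminological rather than computational, and that is where I expect the only genuine obstacle to lie. One must record that ``biharmonic'' is to be understood in the proper sense $\Delta^2\nu=0$ with $\Delta\nu\neq0$, since the harmonic surfaces classified in Theorem \ref{L4MinimalFullCLASSTHEOPW1Type} trivially satisfy $\Delta^2\nu=0$ and would otherwise count as biharmonic. With this convention fixed, the argument closes immediately: the computation above shows that $\Delta^2\nu=0$ drives $\nu$ all the way down to $\Delta\nu=0$, so no Lorentzian minimal surface in $\mathbb E^4_1$ can carry a \emph{proper} biharmonic Gauss map, which is precisely the assertion of the Corollary.
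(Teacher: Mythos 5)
Your proposal is correct and takes essentially the same route as the paper: the corollary there is an immediate consequence of the preceding theorem, whose proof is exactly the computation you specialize to $f=0$ (the open-set argument forcing $K$ and $K^D$ constant, then the algebraic system yielding $K=K^D=0$, hence $\Delta\nu=0$ by \eqref{LemmaDeltaf1f2EQ}). Your terminological remark is also the right reading: ``biharmonic'' must mean proper biharmonic ($\Delta^2\nu=0$ with $\Delta\nu\neq0$), since minimal surfaces with harmonic Gauss map do exist by Theorem \ref{L4MinimalFullCLASSTHEOPW1Type}.
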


\begin{Corol}
There exists no Lorentzian minimal surface with null 2-type Gauss map in the Minkowski space $\mathbb E^4_1$.
\end{Corol}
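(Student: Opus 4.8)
The plan is to show that the null 2-type hypothesis forces the tangent Gauss map to satisfy precisely the fourth-order relation \eqref{Delta2nufDnu} with a \emph{constant} coefficient, at which point the theorem just established applies and collapses the surface to 1-type, contradicting genuine 2-typeness.

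First I would unwind the definition of a null 2-type Gauss map from \eqref{FiniteTypeGaussDef}. Writing $\nu=\nu_0+\nu_1+\nu_2$ with $\nu_0$ constant, $\Delta\nu_1=\lambda_1\nu_1$, $\Delta\nu_2=\lambda_2\nu_2$, $\lambda_1<\lambda_2$, and one of the two eigenvalues equal to zero, exactly one nonzero eigenvalue survives; call it $\lambda$ and let $\nu_*$ denote its non-constant eigencomponent. Since the constant part and the harmonic ($\lambda=0$) part are annihilated by $\Delta$, one gets $\Delta\nu=\lambda\,\nu_*$, and applying $\Delta$ once more yields $\Delta^2\nu=\lambda^2\nu_*=\lambda\,\Delta\nu$. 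Thus $\nu$ satisfies \eqref{Delta2nufDnu} with the \emph{constant} function $f=\lambda\neq0$.

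Next I would invoke the preceding theorem: any Lorentzian minimal surface in $\mathbb E^4_1$ whose Gauss map obeys \eqref{Delta2nufDnu} is necessarily of 1-type, and in fact the argument in its proof yields $K=K^D=0$, so that \eqref{LemmaDeltaf1f2EQ} gives $\Delta\nu=2K\nu+2K^D\mu=0$. But from the first step $\Delta\nu=\lambda\,\nu_*$ with $\lambda\neq0$ and $\nu_*$ a non-constant, hence nonzero, eigenvector, so $\Delta\nu\neq0$. This contradiction rules out the existence of such a surface.

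The computation in the first step is routine, so I expect the only point requiring care to be the bookkeeping of which eigenvalue vanishes: for a null 2-type map either $\lambda_1=0<\lambda_2$ or $\lambda_1<\lambda_2=0$, but in both cases exactly one eigencomponent is non-harmonic and the reduction to \eqref{Delta2nufDnu} with constant $f\neq0$ is identical. The essential analytic work, namely establishing the vanishing of $K$ and $K^D$, is already carried out in the theorem, so no genuine obstacle remains beyond recording that a genuinely 2-type map cannot simultaneously be of 1-type, which is immediate from the uniqueness of the spectral decomposition \eqref{FiniteTypeGaussDef}.
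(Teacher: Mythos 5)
Your proposal is correct and follows exactly the route the paper intends: the paper states this corollary without proof as an immediate consequence of the preceding theorem, and the implicit argument is precisely yours --- a null $2$-type Gauss map satisfies \eqref{Delta2nufDnu} with $f$ equal to the nonzero eigenvalue, so the theorem (whose proof gives $K=K^D=0$, hence $\Delta\nu=0$) forces $\lambda\nu_*=0$, contradicting the non-constancy of the eigencomponent $\nu_*$. Your closing observation that the harmonicity route avoids any appeal to uniqueness of the spectral decomposition is a nice touch, since on non-compact Lorentzian surfaces that uniqueness is the only delicate point.
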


\bibliographystyle{plain} 
\bibliography{NCTurgay_Bibtex}

\begin{thebibliography}{10}

\bibitem{Arslan-and}
K.~Arslan, B.~K. Bayram, B.~Bulca, Y.~H. Kim, C.~Murathan, and G.~\"Ozt\"{u}rk.
\newblock Rotational embeddings in $e^4$ with pointwise 1-type gauss map.
\newblock {\em Turk. J. Math.}, 35:493--499, 2011.

\bibitem{Bleecker-Weiner}
D.~D Bleecker and J.~L. Weiner.
\newblock Extrinsic bounds on $\lambda_1$ of $\delta$ on a compact manifold.
\newblock {\em Comment Math. Helv.}, 51:601--609, 1976.

\bibitem{ChenKitap}
B.~Y. Chen.
\newblock {\em Total mean curvature and submanifolds of finite type}.
\newblock World Scientific, London, 1984.

\bibitem{ChenMakale1986}
B.~Y. Chen.
\newblock Finite type pseudo-riemannian submanifolds.
\newblock {\em Tamkang J. of Math}, 17:137--151, 1986.

\bibitem{ChenRapor}
B.~Y. Chen.
\newblock A report on submanifolds of finite type.
\newblock {\em Soochow J. Math.}, 22:117--337, 1996.

\bibitem{CCK}
B.~Y. Chen, M.~Choi, and Y.~H Kim.
\newblock Surfaces of revolution with pointwise 1-type gauss map.
\newblock {\em J. Korean Math.}, 42:447--455, 2005.

\bibitem{Chen-Piccinni}
B.~Y. Chen and P.~Piccini.
\newblock Submanifolds with finite type gauss map.
\newblock {\em Bull. Austral. Math. Soc.}, 35:161--186, 1987.

\bibitem{KKKM}
M.~Choi, U.~H. Ki, and Y.~J. Suh.
\newblock Classification of rotation surfaces in pseudo-euclidean space.
\newblock {\em J. Korean Math. Soc.}, 35:315--330, 1998.

\bibitem{Choi-Kim}
M.~Choi and Y.~H Kim.
\newblock Characterization of the helicoid as ruled surfaces with pointwise
  $1$-type gauss map.
\newblock {\em Bull. Korean Math. Soc.}, 38:753--761, 2001.

\bibitem{UDur}
U.~Dursun.
\newblock Hypersurfaces with pointwise 1-type gauss map.
\newblock {\em Taiwanese J. Math.}, 11:1407--1416, 2007.

\bibitem{UDur2}
U.~Dursun.
\newblock Hypersurfaces with pointwise 1-type gauss map in lorentz-minkowski
  space.
\newblock {\em Proc. Est. Acad. Sci.}, 58:146--161, 2009.

\bibitem{UDur3}
U.~Dursun.
\newblock Flat surfaces in the euclidean space $e^3$ with pointwise 1-type
  gauss map.
\newblock {\em Bull. Malays. Math. Sci. Soc.}, 33:469--478, 2010.

\bibitem{Dursun-Arsan}
U.~Dursun and G.~G. Arsan.
\newblock Surfaces in the euclidean space $\mathbb e^ 4$ with pointwise 1-type
  gauss map.
\newblock {\em Hacet. J. Math. Stat.}, 40:617--625, 2011.

\bibitem{MinkowskiDursunTurgay}
U.~Dursun and N.~C. Turgay.
\newblock Space-like surfaces in minkowski space $\mathbb e^4_1$ with pointwise
  1-type gauss map.
\newblock (submitted), arxiv:1305.5419.

\bibitem{FuHou2010}
Y.~Fu and Z.~H. Hou.
\newblock Classification of lorentzian surfaces with parallel mean curvature
  vector in pseudo-euclidean spaces.
\newblock {\em J. Math. Anal. Appl.}, 371:25--40, 2010.

\bibitem{Kim-Yoon}
Y.~H Kim and Y.~W. Yoon.
\newblock Ruled surfaces with pointwise 1-type gauss map.
\newblock {\em J. Geom. Phys.}, 34:191--205, 2000.

\bibitem{Kim-Yoon-2}
Y.~H Kim and Y.~W. Yoon.
\newblock Classification of rotation surfaces in pseudo-euclidean space.
\newblock {\em J. Korean Math.}, 41:379--396, 2004.

\bibitem{MILOUSHEVA01}
V.~Milousheva.
\newblock Marginally trapped surfaces with pointwise 1-type gauss map in
  minkowski 4-space.
\newblock {\em Int. J. Geom.}, 2:34--43, 2013.

\bibitem{ONeillKitap}
M.~P. O'Neill.
\newblock {\em Semi-Riemannian geometry with applications to relativity}.
\newblock Academic Press, New Jersey, 1983.

\bibitem{ReillyHelvaci}
R.~C. Reilly.
\newblock On the first eigenvalue of the laplacian for compact submanifolds of
  euclidean space.
\newblock {\em Comment. Math. Helv.}, 52:525--533, 1977.

\bibitem{NCTGenRelGrav}
N.~C. Turgay.
\newblock On the marginally trapped surfaces in 4-dimensional space-times with
  finite type gauss map.
\newblock (accepted).

\end{thebibliography}

\end{document}